\documentclass[sn-mathphys-num]{sn-jnl}

\usepackage{graphicx}%
\usepackage{multirow}%
\usepackage{amsmath,amssymb,amsfonts}%
\usepackage{amsthm}%
\usepackage{mathrsfs}%
\usepackage[title]{appendix}%
\usepackage{xcolor}%
\usepackage{textcomp}%
\usepackage{manyfoot}%
\usepackage{booktabs}%
\usepackage{algorithm}%
\usepackage{algorithmicx}%
\usepackage{algpseudocode}%
\usepackage{listings}%

\theoremstyle{thmstyleone}
\newtheorem{theorem}{Theorem}[section]

\newtheorem{conjecture}[theorem]{Conjecture}
\newtheorem{corollary}[theorem]{Corollary}

\theoremstyle{thmstyletwo}

\theoremstyle{thmstylethree}

\raggedbottom

\begin{document}
\title[A note on the exact formulas for certain $2$-color partitions]{A note on the exact formulas for certain $2$-color partitions}

\author*[1]{\fnm{Russelle} \sur{Guadalupe}}\email{rguadalupe@math.upd.edu.ph}

\affil*[1]{\orgdiv{Institute of Mathematics}, \orgname{University of the Philippines}, \orgaddress{\street{Diliman}, \city{Quezon City}, \postcode{1101}, \country{Philippines}}}

\abstract{Let $p\leq 23$ be a prime and $a_p(n)$ counts the number of partitions of $n$ where parts that are multiple of $p$ come up with $2$ colors. Using a result of Sussman, we derive the exact formula for $a_p(n)$ and obtain an asymptotic formula for $\log a_p(n)$. Our results partially extend the work of Mauth, who proved the asymptotic formula for $\log a_2(n)$ conjectured by Banerjee et al.}

\keywords{circle method, $\eta$-quotients, partitions, asymptotic formula}

\pacs[MSC Classification]{Primary 11P55, 11P82, 05A16}

\maketitle

\section{Introduction}\label{sec1}

Throughout this paper, we denote $(a;q)_\infty = \prod_{n\geq 0}(1-aq^n)$ for $a\in\mathbb{C}$. Recall that a partition of a positive integer $n$ is a nonincreasing finite sequence of positive integers, known as its parts, whose sum is $n$. We define $p(n)$ as the number of partitions of $n$, which can be seen as the coefficients of its generating function given by
\begin{align*}
\dfrac{1}{(q;q)_\infty} = \sum_{n=0}^\infty p(n)q^n.
\end{align*}  
Hardy and Ramanujan \cite{hadram} proved the following asymptotic formula for $p(n)$ given by
\begin{align*}
p(n) \sim \dfrac{1}{4n\sqrt{3}}\exp\left(\pi\sqrt{\dfrac{2n}{3}}\right), \quad n\rightarrow\infty,
\end{align*}
using the celebrated Circle Method. Rademacher \cite{rade} refined the Hardy-Ramanujan Circle Method and derived the exact formula
\begin{align*}
p(n) = \dfrac{1}{\pi\sqrt{2}}\sum_{k=1}^\infty A_k(n)\sqrt{k}\dfrac{d}{dn}\left(\dfrac{\sinh(\frac{\pi}{k}\sqrt{\frac{2}{3}(n-\frac{1}{24})})}{\sqrt{n-\frac{1}{24}}}\right)
\end{align*}
where 
\begin{align*}
A_k(n) := \sum_{\substack{0\leq h < k\\ \gcd(h,k)=1}} \exp\left[\pi i \left(s(h,k)-\dfrac{2nh}{k}\right)\right]
\end{align*}
and 
\begin{align*}
s(h,k) := \sum_{j=1}^{k-1}\dfrac{j}{k}\left(\left\{\dfrac{hj}{k}\right\}-\dfrac{1}{2}\right)
\end{align*}
is the Dedekind sum, where $\{t\}$ denotes the fractional part of $t$. Recently, Banerjee et al. \cite{baner1} proved the following refined asymptotic formula for $p(n)$ given by
\begin{align*}
\log p(n)\sim \pi\sqrt{\dfrac{2n}{3}}-\log n-\log 4\sqrt{3}-\dfrac{0.44\cdots}{\sqrt{n}}, \quad n\rightarrow\infty,
\end{align*}
from a family of inequalities for $p(n)$, which was used to prove a refine of the inequality of DeSalvo and Pak \cite{salvop}, and Chen, Wang and Xie \cite{chenwx} that reads
\begin{align*}
\left(1+\dfrac{\pi}{24n^{3/2}}-\dfrac{1}{n^2}\right)p(n-1)p(n+1) < p(n)^2 < \left(1+\dfrac{\pi}{24n^{3/2}}\right)p(n-1)p(n+1)
\end{align*}
for $n\geq 120$.

Let $a_k(n)$ be the number of partitions of $n$ where parts that are multiple of $k$ come up with $2$ colors. For $k=1$, we have $a_1(n)= p_2(n)$ where $p_2(n)$ is the total number of partitions of $n$ where parts come up with $2$ colors and for $k=2$, $a_2(n)$ also counts the number of cubic partitions (see \cite{bkim}). The generating function for $a_k(n)$ is given by \cite{ahmed}
\begin{align*}
\dfrac{1}{(q;q)_\infty(q^k;q^k)_\infty} = \sum_{n=0}^\infty a_k(n)q^n.
\end{align*}  
Kotesovec \cite{kotes} found the following asymptotic formula for $a_2(n)$ given by 
\begin{align*}
a_2(n) \sim \dfrac{1}{8n^{5/4}}\exp(\pi\sqrt{n}), \quad n\rightarrow\infty.
\end{align*}
Banerjee et al. \cite{baner1} conjectured that $a_2(n)$ satisfies the following asymptotic formula
\begin{align}
\log a_2(n)\sim \pi\sqrt{n}-\dfrac{5}{4}\log n-\log 8-\dfrac{0.79\cdots}{\sqrt{n}}, \quad n\rightarrow\infty. \label{eq1}
\end{align}
Recently, Mauth \cite{mauth} proved (\ref{eq1}) by finding the exact formula of Rademacher type for $a_2(n)$ using a result of Zuckerman \cite{zucker}.

In this paper, we use a result of Sussman \cite{suss} to derive Rademacher-type formulas for $a_p(n)$ when $p\leq 23$ is a prime, and deduce their asymptotic formulas in the spirit of (\ref{eq1}), which can be considered as a partial extension of Mauth's work. We give our main results as follows. 

\begin{theorem}\label{thm11}
Let $p\leq 23$ be a prime. Then for $n\geq 1$ we have
\begin{align*}
	a_p(n) &= 2\pi\sqrt{p}\left(\dfrac{1+p^{-1}}{24n-p-1}\right)\sum_{j=1}^{p-1}\sum_{\substack{m=1\\ m\equiv j\bmod{p}}}^\infty I_2\left(\dfrac{\pi}{6m}\sqrt{(1+p^{-1})(24n-p-1)}\right)\dfrac{b_m(n)}{m}\\
	&+ 2\pi\left(\dfrac{1+p}{24n-p-1}\right)\sum_{\substack{m=1\\ p\mid m}}^\infty I_2\left(\dfrac{\pi}{6m}\sqrt{(1+p)(24n-p-1)}\right)\dfrac{b_m(n)}{m},
\end{align*}
where 
\begin{align*}
	b_k(n) := \sum_{\substack{0\leq h < k\\ \gcd(h,k)=1}} \exp\left[-\dfrac{2\pi nhi}{k}+\pi s(h,k)i+\pi s\left(\dfrac{ph}{\gcd(p,k)},\dfrac{k}{\gcd(p,k)}\right)i\right]
\end{align*}
and $I_2(s)$ is the second modified Bessel function of the first kind (see Section \ref{sec2}).
\end{theorem}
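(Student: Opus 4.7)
The plan is to apply Sussman's Rademacher-type theorem \cite{suss} for eta-quotients directly to the generating series
\[
F_p(q) := \sum_{n\geq 0} a_p(n)q^n = \frac{1}{(q;q)_\infty(q^p;q^p)_\infty} = \frac{q^{(p+1)/24}}{\eta(\tau)\eta(p\tau)},
\]
where $q = e^{2\pi i\tau}$ and $\eta(\tau) = q^{1/24}(q;q)_\infty$, so that $\eta(\tau)\eta(p\tau)$ is a weight-$1$ form on $\Gamma_0(p)$. Since $p$ is prime, $\Gamma_0(p)$ has exactly two cusps, namely $\infty$ and $0$, and consequently the Farey fractions $h/k$ with $\gcd(h,k)=1$ split naturally into two classes according to whether $\gcd(p,k)=1$ or $\gcd(p,k)=p$. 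This dichotomy is precisely what produces the two sums in the statement; the factors $(1+p^{-1})$ and $(1+p)$ appearing inside the Bessel arguments encode the two cusp-order contributions combined with the width rescaling of the Ford circle at $h/k$.

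First, I would verify the hypothesis of Sussman's theorem; for $1/(\eta(\tau)\eta(p\tau))$ on $\Gamma_0(p)$ this reduces to a numerical condition on the orders at the two cusps, which one checks directly to hold for the listed primes $p\leq 23$ (and which is the precise origin of that restriction in the theorem). Next, for each $k\geq 1$ and each $0\leq h<k$ with $\gcd(h,k)=1$, I would apply the modular transformation of $\eta$ at the cusp $h/k$ to both factors $\eta(\tau)$ and $\eta(p\tau)$. The transformation of $\eta(\tau)$ contributes the usual phase $\exp(\pi i\, s(h,k))$. For $\eta(p\tau)$, writing $d := \gcd(p,k)$, $k' := k/d$, and $h' \equiv ph/d \pmod{k'}$, the change of variables $p\tau = (h'+iz')/k'$ with $z' = pz/d$ yields the phase $\exp(\pi i\, s(h',k'))$ together with an effective width factor $p/d$. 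Multiplying these two phases with the Cauchy-kernel contribution $\exp(-2\pi nhi/k)$ then gives exactly the Kloosterman-type sum $b_k(n)$ appearing in the statement.

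Substituting these expansions into the Farey dissection of $(2\pi i)^{-1}\oint F_p(q)q^{-n-1}\,dq$ and evaluating the resulting saddle-point integrals along the Ford circles in closed form produces the modified Bessel function $I_2$; the order $2 = \nu+1$ with $\nu = 1$ reflects Sussman's formula for an eta-quotient of net weight $-1$ whose cuspidal principal parts are simple. The main obstacle is the careful case analysis of the $\eta(p\tau)$ transformation: one must check that both regimes $\gcd(p,k)=1$ and $p\mid k$ collapse into the single Dedekind-sum symbol $s\!\bigl(ph/\gcd(p,k),\,k/\gcd(p,k)\bigr)$ with the correct sign convention, and that the width factor $p/d$ combines with the cusp orders $(p+1)/24$ and $(p+1)/(24p)$ to yield the prefactors $2\pi\sqrt{p}(1+p^{-1})/(24n-p-1)$ and $2\pi(1+p)/(24n-p-1)$, together with the corresponding Bessel arguments $\frac{\pi}{6m}\sqrt{(1+p^{-1})(24n-p-1)}$ and $\frac{\pi}{6m}\sqrt{(1+p)(24n-p-1)}$. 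Once this bookkeeping and the convergence check for $p\leq 23$ are in place, the exact formula follows by direct substitution into Sussman's theorem.
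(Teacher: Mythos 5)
Your proposal is correct and takes essentially the same route as the paper: both amount to a direct application of Sussman's theorem to $1/((q;q)_\infty(q^p;q^p)_\infty)$ with $\mathbf{m}=(1,p)$, $\delta=(-1,-1)$, where the two-cusp dichotomy you describe is exactly the split of residues $l\bmod p$ according to $\Delta_3(l)=1+p^{-1}$ versus $1+p$, and your ``numerical condition at the cusps'' is precisely inequality (\ref{eq2}), which forces $p\leq 23$. The only difference is presentational: the modular-transformation and saddle-point bookkeeping you outline is already packaged inside Theorem \ref{thm21}, so the paper's proof reduces to computing $\Delta_1=1$, $\Delta_2=-p-1$, $\Delta_3(l)$, $\Delta_4(l)$ and substituting.
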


\begin{corollary}\label{cor12}
For $p\leq 23$ prime, we have 
\begin{align*}
	\log a_p(n) \sim \pi\sqrt{\dfrac{2n(1+p^{-1})}{3}}-\dfrac{5}{4}\log n + \log \dfrac{2\sqrt{3p}(1+p^{-1})^{3/4}}{24^{5/4}}-\dfrac{c_p}{24\sqrt{6n}},\quad n\rightarrow\infty
\end{align*}
where 
\begin{align*}
	c_p := \dfrac{135}{\pi\sqrt{1+p^{-1}}}+\pi\sqrt{\dfrac{(1+p)^3}{p}}.
\end{align*}
\end{corollary}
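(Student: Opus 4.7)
\emph{Proof strategy for Corollary \ref{cor12}.} The plan is to extract the single dominant term in the expansion of Theorem \ref{thm11}, absorb everything else into an exponentially small error, and then expand that dominant term to the precision required.

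I would first identify the leading contribution as the $m=1$ summand of the first double sum (in the $j=1$ class): here the only $h$ is $h=0$, so $b_1(n)=1$, and the argument
\begin{align*}
x_1 := \frac{\pi}{6}\sqrt{(1+p^{-1})(24n-p-1)}
\end{align*}
of $I_2$ is the largest occurring. Writing $\alpha := \pi\sqrt{2(1+p^{-1})/3}$ so that $x_1\sim\alpha\sqrt{n}$, this term alone produces exponential growth $\exp(\alpha\sqrt{n})$. Every other summand in the first sum has $m\geq 2$, and every summand in the second sum satisfies $p\mid m$, hence $m\geq p$; using the trivial bound $|b_m(n)|\leq\varphi(m)\leq m$ together with $I_2(x)\ll e^x/\sqrt{x}$, each such term grows at most like $\exp(\alpha\sqrt{n}/\sqrt{2})$. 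Summing (a geometric-type tail bound) gives
\begin{align*}
a_p(n) = \frac{2\pi\sqrt{p}\,(1+p^{-1})}{24n-p-1}\,I_2(x_1) + O\!\left(\exp\!\left(\frac{\alpha\sqrt{n}}{\sqrt{2}}\right)\right),
\end{align*}
so that after taking logarithms the error is negligible.

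Next I would apply the refined Bessel asymptotic
\begin{align*}
I_2(x) = \frac{e^x}{\sqrt{2\pi x}}\left(1 - \frac{15}{8x} + O(x^{-2})\right)
\end{align*}
and expand in descending powers of $\sqrt{n}$. The $-\tfrac{5}{4}\log n$ coefficient arises from combining $-\log(24n-p-1)$ with $-\tfrac{1}{2}\log(2\pi x_1)$, and the remaining constant simplifies to $\log(2\sqrt{3p}(1+p^{-1})^{3/4}/24^{5/4})$ after routine algebra. The two summands of $c_p$ come from different sources: the Taylor expansion
\begin{align*}
\sqrt{24n-p-1}=\sqrt{24n}-\frac{(p+1)\sqrt{6}}{24\sqrt{n}}+O(n^{-3/2})
\end{align*}
contributes a correction to $x_1$ of size $\pi\sqrt{(1+p)^3/p}/(24\sqrt{6n})$, accounting for the $\pi\sqrt{(1+p)^3/p}$ piece of $c_p$, while the factor $-15/(8x_1)$ in the Bessel expansion produces exactly the $135/(\pi\sqrt{1+p^{-1}})$ piece (one checks that $15/(8\alpha) = 135/(24\sqrt{6}\,\pi\sqrt{1+p^{-1}})$).

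The main obstacle is purely algebraic: two independent $n^{-1/2}$-contributions must be reconciled into the stated constant $c_p$, and one must verify that the discarded corrections --- the series tail, the $O(x_1^{-2})$ remainder in the Bessel expansion, and the higher-order Taylor terms of $x_1$ --- are all $o(n^{-1/2})$, so that they indeed vanish from the asymptotic relation. Once this bookkeeping is carried out, the corollary follows at once.
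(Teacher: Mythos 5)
Your proposal follows essentially the same route as the paper: isolate the $m=1$ term of Theorem \ref{thm11} as the dominant contribution, apply $I_2(s)=\frac{e^s}{\sqrt{2\pi s}}\left(1-\frac{15}{8s}+O(s^{-2})\right)$, expand $\sqrt{24n-p-1}$, and take logarithms, and your identification of the two sources of $c_p$ (the $-15/(8x_1)$ Bessel correction and the Taylor expansion of the exponent) matches the paper's computation exactly. You are in fact more explicit than the paper about controlling the $m\geq 2$ tail (the paper merely asserts the series converges rapidly); the only caveat is that for $m\gg\sqrt{n}$ the bound $I_2(x)\ll e^x/\sqrt{x}$ alone does not give a convergent sum over $m$, so you should supplement it with the small-argument estimate $I_2(x)\ll x^2$.
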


Setting $p=2$ in Corollary \ref{cor12}, we deduce the asymptotic formula due to Mauth \cite{mauth} given by
\begin{align*}
\log a_2(n)\sim \pi\sqrt{n}-\dfrac{5}{4}\log n-\log 8-\left(\dfrac{15}{8\pi}+\dfrac{\pi}{16}\right)\dfrac{1}{\sqrt{n}}, \quad n\rightarrow\infty,
\end{align*}
which immediately yields (\ref{eq1}).

The paper is organized as follows. In Section \ref{sec2}, we state the Sussman's result on the exact formulas for the Fourier coefficients of a class of $\eta$-quotients. In Section \ref{sec3}, we apply this result to prove Theorem \ref{thm11}, and using the asymptotic expansion of $I_2(s)$ due to Banerjee \cite{baner2}, we deduce Corollary \ref{cor12}.

\section{Fourier coefficients of a class of $\eta$-quotients}\label{sec2}

We consider in this section the exact formulas for the Fourier coefficients of the following class of holomorphic functions on the open disk given by
\begin{align*}
G(q) := \prod_{r=1}^R(q^{m_r};q^{m_r})_\infty^{\delta_r} =\sum_{n=0}^\infty g(n)q^n,
\end{align*}
where $\mathbf{m}=(m_1,\ldots,m_R)$ is a sequence of $R$ distinct positive integers and $\mathbf{\delta}=(\delta_1,\ldots,\delta_R)$ is a sequence of $R$ nonzero integers. The functions $G(q)$ can be seen as $\eta$-quotients since $(q;q)_\infty=q^{-1/24}\eta(\tau)$, where Dedekind's eta function, denoted
by $\eta(\tau)$, is defined by the infinite product $\eta(\tau):=q^{1/24}\prod_{n\geq 1}(1-q^n)$ with $q=e^{2\pi i\tau}$and $\tau\in\mathbb{H}:=\{z\in\mathbb{C} : \mbox{Im}(z)>0\}$. For useful properties of the Dedekind's eta function and the definition of $\eta$-quotients, we refer the interested reader to the book \cite{ono}. Given a particular $G(q)$ with $\sum_{r=1}^R \delta_r < 0$, Sussman \cite{suss} gave a Rademacher-type exact formula for $g(n)$, which is a special case of the work of Bringmann and Ono \cite{bringn} on the coefficients of harmonic Maass forms. Sussman's proof follows the original approach of Rademacher \cite{rade} on the Hardy-Ramanujan Circle Method. In the case where $\sum_{r=1}^R \delta_r \geq 0$, Chern \cite{chern} obtained an analogous formula with an error term using the method of O-Y. Chan \cite{oych}.

Before we state Sussman's result, we need some definitions. For $(h, k)\in\mathbb{N}^2$ with $\gcd(h, k) = 1$, we set
\begin{align*}
\Delta_1 &= -\dfrac{1}{2}\sum_{r=1}^R \delta_r, \quad \Delta_2 = \sum_{r=1}^R m_r\delta_r, \\
\Delta_3(k) &= -\sum_{r=1}^R \dfrac{\gcd(m_r,k)^2}{m_r}\delta_r, \quad \Delta_4(k) = \prod_{r=1}^R \left(\dfrac{\gcd(m_r,k)}{m_r}\right)^{\delta_r/2},\quad\text{ and }\\
\widehat{A}_k(n) &= \sum_{\substack{0\leq h < k\\ \gcd(h,k)=1}}\exp\left(-\dfrac{2\pi nhi}{k}-\pi i\sum_{r=1}^R\delta_r s\left(\dfrac{m_rh}{\gcd(m_r,k)},\dfrac{k}{\gcd(m_r,k)}\right)\right),
\end{align*}
where $s(h,k)$ is the Dedekind sum defined in Section \ref{sec1}. We also set $L$ as the least common multiple of $m_1,\ldots,m_R$, and partition the set $\{1,\ldots,L\}$ into two disjoint subsets
\begin{align*}
\mathcal{L}_{>0} &:= \{1\leq l\leq L: \Delta_3(l) > 0\},\\
\mathcal{L}_{\leq 0} &:= \{1\leq l\leq L: \Delta_3(l) \leq 0\}.
\end{align*}

\begin{theorem}[\cite{suss}]\label{thm21}
If $\Delta_1 > 0$ and the inequality
\begin{align}
	\min_{1\leq r\leq R} \dfrac{\gcd(m_r,l)^2}{m_r}\geq \dfrac{\Delta_3(l)}{24}\label{eq2}
\end{align}
holds for $1\leq l \leq L$, then for positive integers $n > -\Delta_2/24$, we have
\begin{align*}
	g(n) = 2\pi\sum_{l\in \mathcal{L}_{>0}}\Delta_4(l)\left(\dfrac{\Delta_3(l)}{24n+\Delta_2}\right)^{(\Delta_1+1)/2}\sum_{\substack{k=1\\ k\equiv l\bmod{L}}}^\infty I_{\Delta_1+1}\left(\dfrac{\pi}{6k}\sqrt{\Delta_3(l)(24n+\Delta_2)}\right)\dfrac{\widehat{A}_k(n)}{k},
\end{align*}
where 
\begin{align*}
	I_\nu(s) := \sum_{m=0}^\infty \dfrac{(\frac{s}{2})^{\nu+2m}}{m!\Gamma(\nu+m+1)}
\end{align*}
is the $\nu$th modified Bessel function of the first kind, and $\Gamma(s)=\int_0^\infty e^{-t}t^{s-1}\,dt$ is the gamma function.
\end{theorem}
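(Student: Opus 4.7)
The strategy I would adopt is the classical Hardy-Ramanujan-Rademacher circle method, adapted to the $\eta$-quotient setting. First, I would start from Cauchy's integral formula
\begin{align*}
g(n)=\frac{1}{2\pi i}\oint_{C}\frac{G(q)}{q^{n+1}}\,dq,
\end{align*}
where $C$ is a circle of radius $e^{-2\pi/N^{2}}$ for a parameter $N\to\infty$, and perform the standard Farey dissection: each Farey fraction $h/k$ with $0\le h<k\le N$ and $\gcd(h,k)=1$ contributes an arc $\gamma_{h,k}$ surrounding the primitive root of unity $e^{2\pi ih/k}$.

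On $\gamma_{h,k}$, I would substitute $q=\exp(2\pi ih/k-2\pi z/k^{2})$ so the arc is mapped onto a short vertical segment in the $z$-plane. Applying the modular transformation of Dedekind's eta function to each factor $(q^{m_r};q^{m_r})_\infty^{\delta_r}$ (writing $d_r=\gcd(m_r,k)$, one uses the matrix realising the cusp at $h/k$ for the congruence subgroup attached to $m_r$) produces the Dedekind-sum phase $\exp(-\pi i\delta_r\,s(m_rh/d_r,k/d_r))$, a power of $z$, and an exponential $\exp\bigl(\pi\gcd(m_r,k)^{2}\delta_r/(12 m_r z)\bigr)$. After multiplying these contributions together with the factor $q^{-(n+1)}$, the integrand decomposes into a singular piece proportional to $\Delta_4(k)\,z^{-\Delta_1}\exp\bigl(\pi\Delta_3(k)/(12z)+2\pi nz/k^{2}\bigr)$ (the source of the main term) and a regular piece whose size depends on the local behaviour at the cusp.

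Next I would regroup the Farey arcs according to the residue class $l=k\bmod L$ with $L=\operatorname{lcm}(m_1,\ldots,m_R)$, since $\Delta_3(k)$, $\Delta_4(k)$ and each $\gcd(m_r,k)$ depend on $k$ only modulo $L$; this produces the outer sum over $l$. The sum over $h$ coprime to $k$ assembles into $\widehat{A}_k(n)$, and evaluating the remaining $z$-integral for each cusp in $\mathcal{L}_{>0}$ by deforming to a Hankel contour and invoking the standard integral representation of the modified Bessel function yields exactly the claimed factor
\begin{align*}
2\pi\Delta_4(l)\left(\frac{\Delta_3(l)}{24n+\Delta_2}\right)^{(\Delta_1+1)/2} I_{\Delta_1+1}\!\left(\frac{\pi}{6k}\sqrt{\Delta_3(l)(24n+\Delta_2)}\right)\!\frac{\widehat{A}_k(n)}{k}.
\end{align*}

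The main obstacle is the error analysis. Cusps $l\in\mathcal{L}_{\le0}$ give integrands that are non-singular (or only mildly singular) at $z=0$, so their collective contribution must be shown to vanish as $N\to\infty$; the same applies to truncation errors from the Farey arcs and to the deformation from each arc onto Hankel's loop. The hypothesis (\ref{eq2}), which bounds $\min_r\gcd(m_r,l)^{2}/m_r$ from below by $\Delta_3(l)/24$, is precisely the condition that makes these estimates uniform in $l$ and cleanly isolates the main term coming from $\mathcal{L}_{>0}$; together with $\Delta_1>0$ (ensuring absolute convergence of the outer sum and adequate decay of the integrand), it delivers the stated exact identity. This balancing of bounds is where I would expect most of the technical effort to lie.
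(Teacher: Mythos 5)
The paper does not prove Theorem \ref{thm21} itself; it is quoted from Sussman, whose argument the paper explicitly describes as following Rademacher's original circle-method treatment, and your outline (Cauchy integral with Farey dissection, eta-transformation at each cusp producing the Dedekind-sum phases and the singular factor $z^{-\Delta_1}\exp(\pi\Delta_3(k)/(12z))$, regrouping $k$ by its residue mod $L$, Hankel-contour evaluation yielding $I_{\Delta_1+1}$, and hypothesis (\ref{eq2}) controlling the error terms) is exactly that route. The only blemish is a sign slip in your per-factor exponential, which should read $\exp\bigl(-\pi\gcd(m_r,k)^2\delta_r/(12m_rz)\bigr)$ so that the product over $r$ correctly assembles to the $\exp(\pi\Delta_3(k)/(12z))$ you state next.
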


\section{Proofs of Theorem \ref{thm11} and Corollary \ref{cor12}}\label{sec3}

In this section, we use Theorem \ref{thm21} to prove Theorem \ref{thm11}. As a consequence, we obtain Corollary \ref{cor12} using the asymptotic expansion of the modified Bessel function of the first kind due to Banerjee \cite{baner2}.

\begin{proof}[Proof of Theorem \ref{thm11}]
Writing
\begin{align*}
	\dfrac{1}{(q;q)_\infty(q^p;q^p)_\infty} = \sum_{n=0}^\infty a_p(n)q^n,
\end{align*}
we have $\mathbf{m}=(1,p), \mathbf{\delta}=(-1,-1)$ and $L=p$. We compute $\Delta_1=1$ and $\Delta_2=-p-1$. Recalling that $p$ is a prime, we have that for $l\in\{1,\ldots, p\}$,
\begin{align*}
	\Delta_3(l) =\begin{cases}
		1+\dfrac{1}{p}, & l\neq p\\
		1+p, & l=p
	\end{cases},\qquad
	\Delta_4(l) =\begin{cases}
		\sqrt{p}, & l\neq p\\
		1, & l=p
	\end{cases},
\end{align*} 
so that $L_{>0}=\{1,\ldots, p\}$. Since $p\leq 23$, we see that condition (\ref{eq2}) holds for $l\in \{1,\ldots, p\}$. Applying Theorem \ref{thm21} with $\widehat{A}_k(n)=b_k(n)$, we obtain
\begin{align*}
	a_p(n) = 2\pi\sum_{l\in \mathcal{L}_{>0}}\Delta_4(l)\left(\dfrac{\Delta_3(l)}{24n-p-1}\right)\sum_{\substack{m=1\\ m\equiv l\bmod{p}}}^\infty I_2\left(\dfrac{\pi}{6m}\sqrt{\Delta_3(l)(24n-p-1)}\right)\dfrac{b_m(n)}{m}
\end{align*}
for $n>(p+1)/24$. Plugging in the values of $\Delta_3(l)$ and $\Delta_4(l)$ yields the desired result.
\end{proof}

We see that the series for $a_p(n)$ in Theorem \ref{thm11} converges rapidly and that the term $m=1$ contributes signficantly to the series. We thus obtain the asymptotic formula
\begin{align}
a_p(n) \sim 2\pi\sqrt{p}\left(\dfrac{1+p^{-1}}{24n-p-1}\right)I_2\left(\dfrac{\pi}{6}\sqrt{(1+p^{-1})(24n-p-1)}\right),\quad n\rightarrow\infty.\label{eq3}
\end{align}
To prove Corollary \ref{cor12}, we use the following asymptotic formula \cite[eq. (2.11)]{baner2}
\begin{align}
I_\nu(s) \sim \dfrac{e^s}{\sqrt{2\pi s}}\sum_{m=0}^\infty \dfrac{(-1)^md_m(\nu)}{x^m}, \quad s\rightarrow\infty,\label{eq4}
\end{align}
where 
\begin{align*}
d_m(\nu) = \dfrac{\binom{\nu-1/2}{m}(\nu+\frac{1}{2})_m}{2^m}
\end{align*}
with 
\begin{align*}
\dbinom{a}{m}&=\begin{cases}
	\dfrac{a(a-1)\cdots(a-m+1)}{m!}, & m\in \mathbb{N}\\
	1, & m=0
\end{cases},\\
(a)_m&=\begin{cases}
	a(a+1)\cdots (a+m-1), & m\in \mathbb{N}\\
	1, & m=0
\end{cases}
\end{align*}
for $a\in\mathbb{R}$.

\begin{proof}[Proof of Corollary \ref{cor12}]
From (\ref{eq4}) we get
\begin{align*}
	I_2(s) = \dfrac{e^s}{\sqrt{2\pi s}}\left(1-\dfrac{15}{8s}+O\left(\dfrac{1}{s^2}\right)\right).
\end{align*}
In view of (\ref{eq3}),  we have
\begin{align}
	a_p(n)\sim 2\sqrt{3p}&\cdot\dfrac{(1+p^{-1})^{3/4}}{(24n-p-1)^{5/4}}\nonumber\\
	&\cdot\exp\left(\dfrac{\pi}{6}\sqrt{(1+p^{-1})(24n-p-1)}\right)\left(1-\dfrac{45}{8\pi\sqrt{6n(1+p^{-1})}}+O\left(\dfrac{1}{n}\right)\right).\label{eq5}
\end{align}
Since 
\begin{align*}
	\dfrac{1}{(24n-p-1)^{5/4}}&=\dfrac{1}{(24n)^{5/4}}\left(1+O\left(\dfrac{1}{n}\right)\right),\\
	\exp\left(\dfrac{\pi}{6}\sqrt{(1+p^{-1})(24n-p-1)}\right) &=\exp\left(\pi\sqrt{\dfrac{2n(1+p^{-1})}{3}}\right)\left(1-\dfrac{p^{-1/2}(1+p)^{3/2}\pi}{24\sqrt{6n}}+O\left(\dfrac{1}{n}\right)\right),
\end{align*}
we infer from (\ref{eq5}) that
\begin{align*}
	a_p(n)\sim  2\sqrt{3p}&\cdot\dfrac{(1+p^{-1})^{3/4}}{(24n)^{5/4}}\left(1+O\left(\dfrac{1}{n}\right)\right)\cdot\exp\left(\pi\sqrt{\dfrac{2n(1+p^{-1})}{3}}\right)\\
	&\cdot\left(1-\dfrac{p^{-1/2}(1+p)^{3/2}\pi}{24\sqrt{6n}}+O\left(\dfrac{1}{n}\right)\right)\left(1-\dfrac{45}{8\pi\sqrt{6n(1+p^{-1})}}+O\left(\dfrac{1}{n}\right)\right),
\end{align*}
and taking logarithms yields the desired conclusion.
\end{proof}
We remark that when $p > 23$ is a prime, condition (\ref{eq2}) of Theorem \ref{thm21} fails. Nonetheless, by numerical experiments via \textit{Mathematica}, we propose the following conjecture. 

\begin{conjecture}
Corollary \ref{cor12} also holds for primes $p > 23$.
\end{conjecture}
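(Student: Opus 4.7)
The plan is to bypass Theorem \ref{thm11}---whose derivation via Sussman breaks down for $p>23$ because condition (\ref{eq2}) fails---and instead prove only the one-term asymptotic
\begin{align*}
	a_p(n) \sim 2\pi\sqrt{p}\left(\tfrac{1+p^{-1}}{24n-p-1}\right) I_2\!\left(\tfrac{\pi}{6}\sqrt{(1+p^{-1})(24n-p-1)}\right), \quad n\rightarrow\infty,
\end{align*}
directly by the Hardy--Ramanujan circle method. The passage from this asymptotic to Corollary \ref{cor12} is the same calculation already carried out in the proof of the corollary for $p\leq 23$: expand $I_2$ and the exponential, then take logarithms. So it suffices to establish the displayed asymptotic for arbitrary prime $p$.

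First I would start from $a_p(n) = \frac{1}{2\pi i}\oint_{|q|=e^{-T}} G_p(q)q^{-n-1}\,dq$, where $G_p(q):=1/[(q;q)_\infty(q^p;q^p)_\infty]$ and $T\asymp 1/\sqrt{n}$, and perform a Farey dissection at level $N=\lfloor\sqrt{n}\rfloor$. On each Farey arc around $e^{2\pi ih/k}$, applying the modular transformation of $\eta(\tau)$ separately to both factors of $G_p$ yields the same local expansion used in Sussman's proof,
\begin{align*}
	G_p\!\left(e^{2\pi ih/k-2\pi z/k^2}\right) = \Delta_4(k)\,\omega_{h,k}\sqrt{\tfrac{z}{k}}\exp\!\left(\tfrac{\pi\Delta_3(k)}{12kz}+\tfrac{\pi(p+1)z}{12k}\right)\bigl(1+o(1)\bigr),
\end{align*}
where $\omega_{h,k}$ is an explicit 24-th root of unity determined by Dedekind sums. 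The principal arc $h/k=0/1$ can be integrated essentially in closed form and reproduces exactly the $l=1$, $m=1$ summand of Theorem \ref{thm11}, i.e., the right-hand side of the displayed asymptotic above. Crucially this step uses only the local behaviour of $G_p$ at $q=1$, not condition (\ref{eq2}).

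Second I would bound the aggregate contribution of the non-principal arcs. The leading exponential factor on the arc indexed by $(h,k)$ with $k\geq 2$ has saddle-point size of order $\exp\!\bigl(\pi\sqrt{2n\Delta_3(k)/(3k^2)}\bigr)$, and a direct case analysis (distinguishing $p\nmid k$, where $\Delta_3(k)=1+p^{-1}$, from $p\mid k$, where $\Delta_3(k)=1+p$) gives the strict inequality $\Delta_3(k)/k^2<1+p^{-1}=\Delta_3(1)$ for every prime $p$ and every $k\geq 2$. Hence each non-principal arc is exponentially smaller than the main term, and since only $O(N^2)=O(n)$ arcs are in play, their aggregate is still of order $o(M(n))$, where $M(n)$ denotes the main term.

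The main obstacle is the uniform quantitative control of the $o(1)$ remainder in the local expansion---this is precisely the point where Sussman's proof invokes (\ref{eq2}) to guarantee that the tail of the Taylor expansion of $\log G_p$ at each cusp is absolutely summable along a full Ford circle. I plan to circumvent this by truncating the Farey dissection at $N\asymp\sqrt{n}$, so that the error on each individual Ford arc need only be $O(N^{-1})$, a bound within reach of elementary estimates for partial products of $(1-q^n)$; this weakens the conclusion from the exact Rademacher-type formula of Theorem \ref{thm11} to the one-term asymptotic above, but that is exactly what Corollary \ref{cor12} requires.
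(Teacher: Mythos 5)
First, a point of order: the paper offers no proof of this statement---it is recorded as a conjecture supported only by numerical experiments---so there is no argument of the author's to compare yours against; I can only assess your plan on its own terms. Your skeleton (truncated Hardy--Ramanujan dissection, main term from the arc at $q=1$, cross-cusp comparison $\Delta_3(k)/k^2<\Delta_3(1)=1+p^{-1}$ for all $k\geq 2$, polynomially many arcs versus an exponential gap) is the right one, and that inequality is correct for every prime $p$ and every $k\ge 2$. But there is a genuine gap at the principal cusp itself, and it sits exactly where condition (\ref{eq2}) fails. After the modular transformation at $h/k=0/1$ one has, with $q=e^{-2\pi z}$,
\begin{align*}
G_p(q) \;=\; \sqrt{p}\,z\,e^{-\pi(p+1)z/12}\,\exp\!\left(\frac{\pi(1+p^{-1})}{12z}\right)\sum_{a,b\geq 0}p(a)\,p(b)\,e^{-2\pi a/z}\,e^{-2\pi b/(pz)},
\end{align*}
so the $(a,b)$ term carries the exponential $\exp\bigl(\tfrac{\pi}{12z}(1+p^{-1}-24a-24b/p)\bigr)$. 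The terms with $a=0$ and $1\le b<(p+1)/24$ still \emph{grow} as $\operatorname{Re}(1/z)\to\infty$ precisely when $p>23$; they are additional principal-part terms, not a uniform $(1+o(1))$ correction and not an ``$O(N^{-1})$ per-arc error within reach of elementary estimates for partial products.'' Your minor-arc inequality compares distinct cusps and says nothing about these within-cusp secondary terms, which is the actual content of condition (\ref{eq2}). (Two smaller slips: $\Delta_1=1$ here, so the transformation prefactor is proportional to $z$, not $\sqrt{z/k}$, and the $\Delta_2$-term enters with a negative sign since $\Delta_2=-(p+1)$.)

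The gap is fillable, and the inequality you need is easy: for $a=0$ and $b\ge 1$ the exponent coefficient is $1+p^{-1}-24b/p\le 1-23/p<1+p^{-1}$, so each such term, integrated over the arc, contributes on the exponential scale $\exp\bigl(\pi\sqrt{2n(1-23/p)/3}\bigr)$, which is exponentially smaller than the main term; only finitely many of these terms grow, and exponentially small relative errors do not disturb the $n^{-1/2}$ coefficient in $\log a_p(n)$, so Corollary \ref{cor12} would indeed follow. In effect you should be aiming at a Zuckerman/Chern-type formula in which the cusp at $q=1$ carries finitely many Bessel terms, of which only the $b=0$ term survives asymptotically. As written, however, the proposal does not contain the argument that isolates and bounds these extra growing terms, so it does not yet constitute a proof.
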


\bibliography{twocolor}
\end{document}